\newenvironment{proof}[1][Proof]{\begin{trivlist}
\item[\hskip \labelsep {\bfseries #1}]}{\end{trivlist}}
\newtheorem{proposition}{Proposition}
\newtheorem{theorem}{Theorem}
\newtheorem{remark}{Remark}
\newtheorem{lemma}{Lemma}
\newtheorem{definition}{Definition}
\newtheorem{corollary}{Corollary}
\begin{document}


\title{ \bf \Large
 On the Complexity of the Constrained  Input Selection Problem for Structural Linear Systems}

\vspace{-0.9cm}

\author{\IEEEauthorblockN{  S\'ergio Pequito $^{\dagger,\ddagger}$ \qquad  Soummya Kar $^{\dagger}$ \qquad
A. Pedro Aguiar $^{\ddagger,\diamond}$ }

\thanks{ 
This work was partially supported by grant SFRH/BD/33779/2009, from Funda\c{c}\~ao para a Ci\^encia e a Tecnologia  (FCT),  the CMU-Portugal (ICTI) program, and NSF grant 1306128.

$^{\dagger}$ Department of Electrical and Computer Engineering, Carnegie Mellon University, Pittsburgh, PA 15213

$^{\ddagger}$  Institute for System and Robotics, Instituto Superior T\'ecnico, Technical University of Lisbon, Lisbon, Portugal

$^{\diamond}$ Department of Electrical and Computer Engineering, Faculty of Engineering, University of Porto, Porto, Portugal

}

}

\maketitle

\vspace{-0.9cm}

\begin{abstract}                          
This paper studies the problem of, given the structure of a linear-time invariant system and a set of possible inputs, finding the smallest subset of  input vectors that ensures system's structural controllability. We refer to this problem as  the \emph{minimum constrained  input selection} (minCIS) problem, since the selection has to be performed on an initial given set of possible inputs.  We prove that the minCIS problem is \mbox{NP-hard}, which addresses a recent open  question of whether there exist polynomial algorithms (in the size of the system plant matrices) that solve the minCIS problem.  To this end, we show that the associated decision problem, to be referred to as the CIS, of determining whether a subset (of a given collection of inputs) with a prescribed cardinality exists that ensures structural controllability, is NP-complete. Further, we explore in detail practically important subclasses of the minCIS obtained by introducing more specific assumptions either on the system dynamics or the input set instances for which systematic solution methods are provided by constructing explicit reductions to well known computational problems. The analytical findings are illustrated through examples in multi-agent leader-follower type control problems.

\end{abstract}


\section{Introduction}

Research on  large-scale control systems has grown considerably over the last few years, triggered by technological advances in sensing and actuation infrastructures and relatively low cost of deployment. Such pervasive sensing and actuation present tremendous opportunities for enhanced system control, although, at the cost  of handling and processing enormous amounts of sensor data for system state inference and subsequently coordinating generated control signals among the actuators distributed throughout the system. Thus,  it is of importance to understand  which subsets of  sensors and actuators (hence the \emph{smallest} amount of data that need to be processed and coordination  required) are crucial for achieving desirable system monitoring (observability) and control (controllability) performance. These and related questions form the core of the input/output selection problems~\cite{Magnus,largeScale,Skogestad04a} in large-scale control systems. In this paper, we focus on the problem of, given  a possibly large scale  linear-time invariant system and a set of possible inputs, finding the smallest subset of  input  vectors that ensures system's  controllability. Notice that, by duality between controllability and observability for linear-time invariant systems, another problem can be posed in terms of determining the minimal number of outputs that ensure  observability, whose solution is straightforward from knowing how to solve the related controllability problem.

Now, consider the system 
\begin{align}
\dot x(t)&=Ax(t) +B u(t) \label{input}
\end{align}
where $x\in \mathbb{R}^n$ is the state, $u\in \mathbb{R}^p$ and $y\in \mathbb{R}^m$ denote the input and output  vectors, respectively. Additionally, let $\bar A\in \{0,\star\}^{n\times n}$ denote the zero/nonzero or structural pattern of the system matrix $A$, whereas $\bar B\in\{0,\star\}^{n\times p}$  is   the structural pattern of the input matrix $B$; more precisely, an entry in these matrices is zero if the corresponding entry in the system matrices is equal to zero, and a free parameter (denoted by a star) otherwise. Notice that the structural matrices defined above determine the  coupling between the system state variables, and the  state variables actuated by the inputs  deployed  in the system. The structural  matrices are the object of study in \emph{structural systems theory}~\cite{dionSurvey}, where the pair $(\bar A,\bar B)$ is said to be \emph{structurally controllable} if there exists a numerical realization $(A,B)$ in \eqref{input} with the same structure, i.e., having zeros in the specified locations, as $(\bar{A},\bar{B})$ that is controllable.  
 In fact, a stronger \mbox{characterization} holds, and it can be shown that the set of non-controllable numerical realizations $(A,B)$ of a structurally controllable pair  $(\bar{A},\bar{B})$ has zero Lebesgue measure in the product space $\mathbb{R}^{n\times n}\times\mathbb{R}^{n\times p}$; in other words, \emph{almost all} numerical realizations of a structurally controllable pair are controllable~\cite{dionSurvey}. 
Hereafter, we restrict attention to structural system theoretic properties. More specifically, given the structural matrix and possible input configurations,  the \emph{minimum constrained  input selection} (minCIS) problem consists of  identifying the smallest subset of inputs that ensure structural controllability and may  be formally posed as follows

\noindent $\mathcal P_1$ Given $\bar A\in \{0,\star\}^{n\times n}$ and $\bar B\in \{0,\star \}^{n\times p}$, determine
\begin{align}
\mathcal J^*=\arg \min\limits_{\mathcal J\subset\{ 1,\ldots, p\}}& \qquad\qquad |\mathcal J|\\
\text{s.t. }\quad  & (\bar A ,\bar B_{\mathcal J}) \text{ is structurally controllable,}\notag
\end{align}

where $\mathcal J$ is a subset of indices associated with the inputs and $\bar B_{\mathcal J}$ corresponds to the subset of columns in $\bar B$ with index in $\mathcal J$.  \hfill $\diamond$

\begin{remark} The results that we obtain for the minCIS problem $\mathcal{P}_{1}$ readily extend to the corresponding output selection problem by the duality between observability and controllability in linear systems, and, hence, in what follows, we focus on the minCIS only. In addition,  note that the current setup considers continuous time systems, however, all our results apply to the discrete time setting  as well due to similar controllability criteria. \hfill $\diamond$
\end{remark}

Problem $\mathcal P_1$ has been previously explored by several authors, see \cite{CommaultD13} and references therein. In fact, \cite{CommaultD13} provided the motivation for the present paper,  in which the following question was posed: \emph{ Is there a polynomial solution to $\mathcal P_1$}?

In this paper, we address the above question in general scenarios.

%
%

{
In what follows, we use some concepts of computational complexity theory~\cite{Coo71}, that addresses  the classification of  (computational) problems into complexity classes. 
Formally, this classification is for \emph{decision problems}, i.e., problems with an ``yes" or ``no" answer.
 Further, for a decision problem, if there exists a procedure/algorithm that obtains the correct answer in a number of steps that is bounded by a polynomial  in the size of the input data  to the problem, then the algorithm is referred to as an \emph{efficient} or \emph{polynomial} solution to the decision problem and the decision problem is said to be polynomially solvable or belong to the class of polynomially solvable problems. A decision problem  is said to be in NP (i.e., the class of nondeterministic polynomially problems) if, given any possible solution instance, it can be verified using a polynomial procedure whether the instance constitutes a solution to the problem or not.  It is easy to see that any problem that is polynomially is also in NP, although, there are some problems in NP for which it is unclear whether polynomial solutions exist or not. These latter problems are referred to as being NP-complete. Consequently, the class of NP-complete problems are the \emph{hardest} among the NP problems, i.e., those that are verifiable using polynomial algorithms, but no polynomial algorithms  are known to exist that solve them.   Whereas the above classification is intended for decision problems, it can be immediately extended to optimization problems, by noticing that every optimization problem can be posed as a decision problem. More precisely, given a minimization problem, we can pose the following decision problem: Is there a solution to the minimization problem that is less than or equal to a prescribed value? On the other hand, if the solution to the optimization problem is obtained, then any decision version can be easily addressed. Consequently, if a (decision) problem is NP-complete, then the associated optimization problem is referred to as being NP-hard. We refer the reader to \cite{Garey:1979:CIG:578533} for an introduction to the topic, and Section~\ref{prelim} for further discussion.


In fact, one of the main results of the present paper consists in showing  the NP-completeness of the decision version of the minCIS  problem, which we refer to as \emph{constrained input selection} (CIS) problem, and given as follows.

$\mathcal P_1^d$ Is there a  collection of indices $\mathcal J\subset \{1,\ldots, p\}$ with at most $k$ elements (i.e., $|\mathcal J|\le k$) such that $(\bar A,\bar B_{\mathcal J})$ is structurally controllable?

The NP-completeness of CIS is attained by polynomially reducing the \emph{set covering problem} to it. Hence, in particular, polynomial complexity algorithms that solve general instances of the CIS and minCIS are unlikely to exist.  Nevertheless, there could be subclasses of the minCIS   that admit polynomial complexity algorithmic solutions, as  is the case with a practically relevant subclass of minCIS  problems identified in this paper; more precisely, when the input matrix $\bar{B}$ is restricted to be structurally similar to the $n\times n$ identity matrix\footnote{A structural input matrix $\bar{B}$ that is structurally similar to the $n\times n$ identity matrix is referred to as a \emph{dedicated input configuration}, in that, each input can actuate or is connected to at most a single state variable. Such dedicated input configurations are common in several large-scale multi-agent networked control systems such as the power system, see~\cite{IlicLe2008}, for example.} (but $\bar{A}$ is arbitrary).

In addition, since the CIS is NP-complete,  the minCIS may be polynomially reduced to other (more standard) NP-hard problems, through  polynomial reductions between their decision versions. Practically, such reduction may lead to efficient (polynomial complexity) approximation schemes for solving the minCIS with guaranteed suboptimality bounds. While we do not provide such reductions from general minCIS instances to other NP-hard problems, for a certain restricted subclass of minCIS problems (with some additional conditions on the dynamic matrix structure) we explicitly construct a reduction to the minimum set covering problem. This reduction builds upon the complexity remarks elaborated in \cite{CommaultD13}, yet it holds for a larger class of instances, and only relies  on a condition on the structure of the dynamics.} Furthermore, this restricted class is practically relevant and, as shown later, subsumes important applications in multi-agent control such as leader-follower problems \cite{MesbahiEgerstedt,rezatac07};  as a demonstration, we show how our reduction can be used to solve the  leader-selection problem and a more general variant of it, which we refer to as the  constrained leader-selection problem. 


{
The main results of the paper are threefold: (i) we show that CIS is  NP-complete, which implies that the minCIS is NP-hard; (ii) we identify a subclass of minCIS problems that are polynomially solvable; more precisely, under the assumption  that the input matrix is structurally similar to the identity matrix; and (iii) we provide a polynomial reduction of the
minCIS problem to a minimum set covering problem under a mild assumption  on the structure of the dynamic matrix (given in Assumption~1), that hold for several interconnected dynamical systems, as well as leader-selection problems like those introduced in Section 4.

}

The rest of this paper is organized as follows: Section~2 introduces some preliminaries on computational complexity theory, associated complexity classes and polynomial reductions between problems. Additionally,  we review some concepts and results in structural systems theory to be used in the sequel.  Section~3 presents the result that  the CIS is NP-complete, and, subsequently, minCIS is NP-hard.  In Section 4,  a polynomial reduction from the minCIS to the  minimum set covering problem is provided, under certain  assumptions on the minCIS instances. Finally,  an illustrative example is described in Section 5.

\section{Preliminaries and Terminology}\label{prelim}

In this section, we review the \emph{minimum set covering problem}, and its decision version,  referred to as the \emph{set covering problem}~\cite{Cormen}. In addition,   some necessary and sufficient conditions that ensure system's structural controllability, required to obtain the results presented in the paper, are introduced in Section~2.1.

A (computational) problem is said to be \emph{reducible in polynomial time} to another if there exists a procedure to transform the former to the latter using a polynomial number of operations on the size of its inputs. Such reduction is useful in determining the qualitative complexity class \cite{Garey:1979:CIG:578533} a particular problem belongs to. The following result may be used to check for NP-completeness of a given problem.

\begin{lemma}[\cite{Garey:1979:CIG:578533}]
If a problem $\mathcal P_A$ is NP-complete, $\mathcal P_B$ is in NP and $\mathcal P_A$ is  reducible in polynomial time to $\mathcal P_B$, then $\mathcal P_B$ is NP-complete.
\label{NPcompRed}
\hfill $\diamond$
\end{lemma}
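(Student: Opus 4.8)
The plan is to unfold the definition of NP-completeness and verify its two constituent conditions for $\mathcal P_B$. Recall that a problem is NP-complete precisely when (i) it belongs to NP, and (ii) it is NP-hard, in the sense that every problem in NP is reducible in polynomial time to it. Condition (i) for $\mathcal P_B$ is immediate from the hypothesis that $\mathcal P_B$ is in NP, so the entire argument reduces to establishing the NP-hardness of $\mathcal P_B$.

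To establish condition (ii), I would fix an arbitrary problem $\mathcal P_C$ in NP and produce a polynomial-time reduction from $\mathcal P_C$ to $\mathcal P_B$. Since $\mathcal P_A$ is NP-complete, it is in particular NP-hard, and therefore $\mathcal P_C$ admits a polynomial-time reduction to $\mathcal P_A$; denote the associated transformation by $f$. By hypothesis there is also a polynomial-time reduction from $\mathcal P_A$ to $\mathcal P_B$, whose transformation I denote by $g$. The natural candidate reduction from $\mathcal P_C$ to $\mathcal P_B$ is then the composition $g \circ f$, and correctness is inherited transitively: an instance $x$ is a yes-instance of $\mathcal P_C$ if and only if $f(x)$ is a yes-instance of $\mathcal P_A$, which holds if and only if $g(f(x))$ is a yes-instance of $\mathcal P_B$.

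The crux of the argument, and the step I would treat most carefully, is the running-time bound on $g \circ f$. If $f$ runs in time polynomial in the input size $|x|$, then in particular the size of $f(x)$ is bounded by a polynomial in $|x|$; applying $g$, whose cost is polynomial in the size of its own input, therefore incurs a cost polynomial in a polynomial of $|x|$, which is again polynomial in $|x|$. This is simply the observation that the class of polynomial-time transformations is closed under composition, so that polynomial reducibility is a transitive relation. Since $\mathcal P_C$ was an arbitrary member of NP, every problem in NP reduces in polynomial time to $\mathcal P_B$, establishing NP-hardness; combined with $\mathcal P_B$ being in NP, this yields the NP-completeness of $\mathcal P_B$ and completes the proof.
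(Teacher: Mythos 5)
Your proof is correct and complete: it is the canonical textbook argument for this result, which the paper itself does not prove but simply cites from Garey and Johnson. You correctly decompose NP-completeness into membership in NP plus NP-hardness, and you handle the one genuinely delicate point — that a polynomial-time transformation $f$ produces output of size polynomial in $|x|$, so the composed reduction $g \circ f$ remains polynomial — which is exactly the step that makes polynomial reducibility transitive.
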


Now, consider the  set covering (decision) problem: Given a collection of sets  $\{\mathcal S_j\}_{j=1,\ldots,p}$, where $\mathcal S_j\subset \mathcal U$, is there a collection of at most $k$ sets that covers $\mathcal U$, i.e.,  $\bigcup_{ j\in\mathcal K }\mathcal S_j =\mathcal U$, where $\mathcal K\subset\{1,\ldots,p\}$ and $|\mathcal K|\le k$? 

This is the decision problem associated with the \emph{minimum set covering problem}, a well known NP-hard problem, given as follows.

\begin{definition}[\cite{Cormen}]{(Minimum Set Covering Problem)}
\label{setcover}
Given a set of $m$ elements $\,\mathcal{U}=\left\{1,2,\hdots,m\right\}$ and a set of $n$ sets $\mathcal{S}=\left\{\mathcal S_1,\hdots,\mathcal S_n\right\}$ such that $\mathcal S_i\subset \mathcal U$, with $i\in \{1,\cdots,n\}$, and
$\displaystyle\bigcup_{i=1}^{n}\mathcal S_i=\mathcal{U}$, the  minimum set covering problem consists of finding a set of indices $\,\mathcal{I}^*\subseteq \left\{1,2,\hdots,n\right\}$ corresponding to the minimum number of sets covering $\mathcal U$, i.e.,

\[\begin{array}{rl}\mathcal{I}^*=\underset{\mathcal{I}\subseteq \left\{1,2,\hdots,n\right\}}{\arg \min}&\quad |\mathcal{I}|
\\
\text{s.t. } \quad &\mathcal{U}=\displaystyle\bigcup_{i\in\mathcal{I}}\mathcal S_i \ .
\end{array}\]

\hfill$\diamond$
\end{definition}

In particular, the set covering problem is used in the present paper to show the NP-completeness of $\mathcal P_1^d$, by considering the following result.

\begin{proposition}[\cite{Garey:1979:CIG:578533}]
Let $\mathcal P_A$ and $\mathcal P_B$ be two optimization problems, and $\mathcal P_B^d$ the decision versions associated with $\mathcal P_B$. 
If a problem $\mathcal P_A$ is NP-hard, an instance of $\mathcal P_B^d$ can be efficiently verified and $\mathcal P_A$ is  polynomially reducible to $\mathcal P_B$, then  $\mathcal P_B^d$ is NP-complete. In particular, $\mathcal P_B$ is NP-hard.\hfill $\diamond$\label{proposition1}
\end{proposition}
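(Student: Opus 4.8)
The plan is to descend to the level of decision problems and then invoke the transitivity of polynomial reducibility captured by Lemma~\ref{NPcompRed}, applied to the decision versions $\mathcal{P}_A^d$ and $\mathcal{P}_B^d$ rather than to the optimization problems themselves.

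First I would record what each hypothesis supplies. By the convention adopted above, asserting that the optimization problem $\mathcal{P}_A$ is NP-hard is exactly asserting that its associated decision version $\mathcal{P}_A^d$ is NP-complete; in particular $\mathcal{P}_A^d$ is in NP. Likewise, the assumption that instances of $\mathcal{P}_B^d$ can be efficiently verified is precisely the statement that $\mathcal{P}_B^d$ is in NP. It therefore remains only to produce a polynomial reduction from $\mathcal{P}_A^d$ to $\mathcal{P}_B^d$: once this is available, Lemma~\ref{NPcompRed}, applied with $\mathcal{P}_A^d$ in the role of the NP-complete problem and $\mathcal{P}_B^d$ in the role of the problem in NP, yields immediately that $\mathcal{P}_B^d$ is NP-complete. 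The final claim that $\mathcal{P}_B$ is NP-hard is then just the reverse reading of the convention used in the first step, namely that an optimization problem whose decision version is NP-complete is by definition NP-hard.

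The core of the argument is thus to lift the given polynomial reduction between the optimization problems $\mathcal{P}_A$ and $\mathcal{P}_B$ to a reduction between their decision versions. The reduction furnishes a polynomial-time map $f$ carrying each instance of $\mathcal{P}_A$ to an instance of $\mathcal{P}_B$, together with a polynomial-time computable correspondence between the optimal costs of the two instances. Given a decision instance of $\mathcal{P}_A^d$, consisting of an instance $x$ together with a threshold $k$ (the query being whether $\mathcal{P}_A$ admits on $x$ a feasible solution of cost at most $k$), I would form the $\mathcal{P}_B$ instance $f(x)$ and translate $k$, through the known cost correspondence, into the matching threshold $k'$. Since both $f$ and the cost translation are computable in polynomial time, this produces in polynomial time an instance of $\mathcal{P}_B^d$ whose answer coincides with that of the original $\mathcal{P}_A^d$ instance, which is exactly a polynomial many-one reduction from $\mathcal{P}_A^d$ to $\mathcal{P}_B^d$.

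The step I expect to be the main obstacle is precisely this lifting: the notion of \emph{polynomial reducibility between optimization problems} must be strong enough that the cost threshold can be carried across in polynomial time, for instance because the reduction preserves the optimal cost or relates the two optima through an explicit, efficiently computable function. Once this value-translation is pinned down, the remaining deductions are routine, amounting to the bookkeeping of the NP-completeness and NP-hardness conventions together with a single application of Lemma~\ref{NPcompRed}.
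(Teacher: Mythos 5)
The paper offers no proof of this proposition at all: it is quoted verbatim as a known result from Garey and Johnson, and the paper's only engagement with it is to invoke it in the proof of Theorem~\ref{NPcompCMIS}, with minimum set covering playing the role of $\mathcal P_A$ and the minCIS that of $\mathcal P_B$. So there is no paper proof to compare against, and your argument must be judged on its own merits; on that basis it is sound. You correctly unpack the paper's conventions (NP-hardness of an optimization problem is, by the paper's definition, NP-completeness of its decision version; efficient verifiability of $\mathcal P_B^d$ is membership in NP), reduce the claim to exhibiting a polynomial many-one reduction from $\mathcal P_A^d$ to $\mathcal P_B^d$, and close with Lemma~\ref{NPcompRed}. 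The lifting step --- translating a cost threshold $k$ for $\mathcal P_A^d$ into a threshold $k'$ for $\mathcal P_B^d$ via an efficiently computable correspondence of optimal values --- is indeed the only nontrivial content, and you are right to flag that it requires the (loosely stated) notion of polynomial reducibility between optimization problems to be read with this strength; that reading is precisely the one the paper relies on, since in Theorem~\ref{NPcompCMIS} the constructed reduction maps feasible solutions to feasible solutions of identical cardinality in both directions, so the threshold translation there is the identity and your argument specializes to it. One minor streamlining: you do not actually need $\mathcal P_A^d$ to be in NP (hence NP-complete rather than merely NP-hard); transitivity of polynomial reductions applied to an NP-hard $\mathcal P_A^d$ already makes $\mathcal P_B^d$ NP-hard, and membership of $\mathcal P_B^d$ in NP then yields completeness --- though under the paper's conventions the two readings coincide, so this does not affect correctness.
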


\subsection{Structural Systems}

Structural systems provide an efficient representation of a linear-time invariant system as a directed graph (digraph). A digraph consists of a set of \emph{vertices} $\mathcal V$ and a set of \emph{directed edges} $\mathcal E_{\mathcal V,\mathcal V}$ of the form $(v_i,v_j)$ where $v_i,v_j\in \mathcal V$. If a vertex $v$ belongs to the endpoints of an edge $e\in\mathcal E_{\mathcal V,\mathcal V}$, we say that the edge $e$ is incident to $v$. We represent the \emph{state digraph}  by $\mathcal{D}(\bar{A})=(\mathcal{X},\mathcal{E}_{\mathcal{X},\mathcal{X}})$, i.e., the digraph that comprises only the state variables as vertices denoted by $\mathcal X=\{x_1,\cdots,x_n\}$ and a set of directed edges between the state vertices denoted by $\mathcal{E}_{\mathcal X,\mathcal X}=\left\{(x_i,x_j)\in \mathcal X\times \mathcal X:\ \bar A_{j,i}\neq 0\right\}$. Similarly, we represent the \emph{system digraph} by $\mathcal D(\bar A,\bar B)=(\mathcal{X}\cup\mathcal{U},\mathcal{E}_{\mathcal{X},\mathcal{X}}\cup\mathcal{E}_{\mathcal{U},\mathcal{X}})$, where  $\mathcal U=\{u_1,\cdots,u_p\}$ corresponds to the input vertices and $\mathcal{E}_{\mathcal{U},\mathcal{X}}=\left\{(u_i,x_j)\in \mathcal U\times \mathcal X: \ \bar B_{i,j}\neq 0\right\}$ the edges identifying which state variables are actuated by which inputs. Further, we say that an input  $u_{i}$ is assigned to a state variable $x_{j}$ if $\bar{B}_{i,j}\neq 0$.

A \emph{directed path} between the vertices $v_1$ and $v_k$ is a sequence of edges $\{(v_1,v_2),(v_2,v_3),\hdots,$ $(v_{k-1},v_k)\}$. If all the vertices in a directed path are different, then the path is said to be an \emph{elementary path}. A \emph{cycle} is a directed path such that $v_1=v_k$ and all remaining vertices in the direct path are distinct.  


We also require the following graph theoretic notions~\cite{Cormen}: A digraph $\mathcal{D}$ is strongly connected if there exists a directed path between any two vertices. A \emph{strongly connected component} (SCC) is a maximal subgraph  $\mathcal{D}_S=(\mathcal V_S,\mathcal E_S)$ of $\mathcal{D}$ such that for every $u,v \in\mathcal V_S$ there exist paths from $u$ to $v$ and from $v$ to $u$.

By visualizing each SCC as a virtual node, we can build a \textit{directed acyclic graph} (DAG) representation, in which a directed edge exists between vertices belonging to two SCCs \emph{if and only if} there exists a directed edge connecting the corresponding SCCs in the original digraph $\mathcal D=(\mathcal V,\mathcal E)$. The construction of the DAG associated with $\mathcal{D}(\bar A)$ can be performed efficiently in $\mathcal{O}(|\mathcal V|+|\mathcal E|)$~\cite{Cormen}. {
In Figure~\ref{DAGexample}, we present a digraph and its DAG representation: by convention, the arrows connecting the different SCCs are facing downwards, which motivates the classification of the SCCs in the DAG as follows.}

\begin{definition}[\cite{PequitoJournal,PequitoACC13}]\label{linkedSCC}
 An SCC is said to be linked if it has at least one incoming/outgoing edge from another SCC. In particular, an SCC is \textit{non-top linked} if it has no incoming edges to its vertices from the vertices of another SCC.
\hfill $\diamond$
\end{definition}
\begin{figure}[ht]
\centering
\includegraphics[scale=0.45]{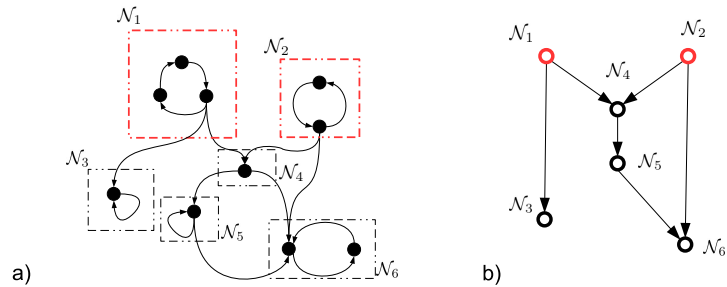}
\caption{ In a) the  SCCs are depicted by dashed boxes, labelled by $\mathcal N_i$ ($i=1,\ldots,6$), and  the non-top linked SCCs $\mathcal{N}_{1}$ and $\mathcal{N}_{2}$ are depicted in red. In b), these SCCs  correspond to vertices ($\mathcal{N}_{1}$ and $\mathcal{N}_{2}$) in the DAG representation.}
\label{DAGexample}
\end{figure}

Given $\mathcal D=(\mathcal V,\mathcal E)$, we can construct a \emph{bipartite graph} $\mathcal{B}(\mathcal S_1,\mathcal S_2,\mathcal E_{\mathcal S_1,\mathcal S_2})$, where $\mathcal S_{1}, \mathcal S_{2}\subset \mathcal V$ and the edge set $
\mathcal E_{\mathcal S_1,\mathcal S_2}=\{(s_1,s_2)\in \mathcal E \ :\ s_1 \in \mathcal S_1, s_2 \in \mathcal S_2  \ \}.
$ 
Such bipartite graphs will be used throughout in connection with the minCIS and we provide some elementary concepts associated with bipartite graphs.  Given $\mathcal{B}(\mathcal S_1,\mathcal S_2,\mathcal E_{\mathcal S_1,\mathcal S_2})$, a matching $M$ corresponds to a subset of edges in $\mathcal E_{\mathcal S_1,\mathcal S_2}$ that do not share vertices, i.e., given edges  $e=(s_1,s_2)$ and $e'=(s_1',s_2')$ with $s_1,s_1' \in \mathcal S_1$ and $s_2,s_2'\in \mathcal S_2$, $e, e' \in M$ only if $s_1\neq s_1'$ and $s_2\neq s_2'$.
A maximum matching $M^{\ast}$ is a matching $M$ with the largest number of edges among all possible matchings. Note that, in general, a maximum matching may not be unique. A maximum matching can be computed efficiently in $\mathcal{O}(\sqrt{|\mathcal S_1\cup \mathcal S_2|}|\mathcal E_{\mathcal S_1,\mathcal S_2}|)$ using, for instance, the Hopcroft-Karp algorithm \cite{Cormen}.

Given a matching $M$, an edge is said to be matched with respect to (w.r.t.) $M$, if it  belongs to $M$. In addition, we say that a vertex $\,v\in\mathcal{V}_1\cup\mathcal{V}_2\,$ is \emph{matched} if it is incident to some matched edge in $M$, otherwise we say that the vertex is \emph{free}  w.r.t. $M$. Incident and free vertices can be further characterized as follows: a vertex in $\mathcal S_2$ is a \textit{right-matched vertex} if it is incident to an edge in $M^{\ast}$, otherwise, it is an \textit{right-unmatched vertex}. A maximum matching in which there are no free vertices (or equivalently, either left/right-unmatched vertices) is called a \textit{perfect matching}.

Given a state digraph $\mathcal D(\bar A)=(\mathcal X,\mathcal E_{\mathcal X,\mathcal X})$, a particular bipartite graph of interest is its bipartite representation  denoted  as $\mathcal B(\bar A)\equiv \mathcal B(\mathcal X,\mathcal X,\mathcal E_{\mathcal X,\mathcal X})$, and we refer to it as the \emph{state bipartite graph}. The state bipartite graph may be used to characterize all possible structurally controllable pairs $(\bar A,\bar B)$, see~\cite{PequitoJournal}. In particular, in the sequel, we will use the following result.

\begin{proposition}[\cite{PequitoJournal,PequitoACC13}]
	Given $\mathcal{D}(\bar A)=(\mathcal X,\mathcal E_{\mathcal X,\mathcal X})$ and its DAG representation, constituted by $k$ SCCs, denoted by $\{\mathcal{N}_{i}\} _{i=1}^{k}$, where  $\mathcal{N}_{i}=(\mathcal{X}_i,\mathcal{E}_{\mathcal{X}_{i},\mathcal{X}_{i}})$, let $\mathcal{N}_{i_1},\hdots\mathcal{N}_{i_m}$ be the non-top linked SCCs in the DAG representation with $\{i_1,\hdots,i_m\}\subset\{1,\hdots,k\}$
and $\mathcal B(\bar A)$ the state bipartite graph. If $\mathcal B(\bar A)$ has a perfect matching, then $(\bar A,\bar B)$ is structurally controllable if and only if for each non-top linked SCC there exists an input  (corresponding to a column in $\bar{B}$) assigned to, i.e., connected to, at least one of its state variables. \hfill $\square$
\label{designPerfectMatching}
\end{proposition}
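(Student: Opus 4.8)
The plan is to reduce the claim to the classical two-part characterization of structural controllability and then to exploit the perfect-matching hypothesis to collapse that characterization down to a pure reachability condition, which is precisely the non-top linked SCC condition. Concretely, I would first invoke the standard characterization \cite{dionSurvey}: the pair $(\bar A,\bar B)$ is structurally controllable if and only if (i) the system digraph $\mathcal{D}(\bar A,\bar B)$ is \emph{accessible}, i.e., every state vertex $x\in\mathcal{X}$ is the endpoint of a directed path originating at some input vertex $u\in\mathcal{U}$; and (ii) $\mathcal{D}(\bar A,\bar B)$ contains \emph{no dilation}, equivalently (by Hall's theorem) the bipartite graph $\mathcal{B}(\mathcal{X}\cup\mathcal{U},\mathcal{X},\mathcal{E}_{\mathcal{X}\cup\mathcal{U},\mathcal{X}})$ admits a matching saturating every right vertex in $\mathcal{X}$. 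The proof then handles (i) and (ii) separately.

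The second step is to show that condition (ii) is satisfied automatically under the hypothesis that $\mathcal{B}(\bar A)$ has a perfect matching. Since $\mathcal{B}(\bar A)=\mathcal{B}(\mathcal{X},\mathcal{X},\mathcal{E}_{\mathcal{X},\mathcal{X}})$ and $\mathcal{E}_{\mathcal{X},\mathcal{X}}\subseteq\mathcal{E}_{\mathcal{X}\cup\mathcal{U},\mathcal{X}}$, a perfect matching of $\mathcal{B}(\bar A)$ is, in particular, a matching of the larger bipartite graph that already saturates every right vertex of $\mathcal{X}$ using only state-to-state edges, independently of $\bar B$. Hence no dilation is present for any $\bar B$, so (ii) is never the binding constraint and structural controllability becomes equivalent to accessibility (i) alone.

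The third step is to establish that accessibility (i) holds if and only if each non-top linked SCC contains a state variable assigned to some input. For necessity, suppose some non-top linked SCC $\mathcal{N}$ carries no input; by Definition~\ref{linkedSCC} a non-top linked SCC has no incoming edges from the vertices of other SCCs, so the only way to enter $\mathcal{N}$ along a directed path is either directly from an input (excluded by assumption) or from another SCC (excluded by non-top linkedness), whence no state of $\mathcal{N}$ is reachable from $\mathcal{U}$ and accessibility fails. For sufficiency, fix an arbitrary state $x$ lying in some SCC $\mathcal{N}_j$ and walk backwards against the DAG edges from $\mathcal{N}_j$; since the DAG is finite and acyclic, this walk terminates at an SCC with no incoming DAG edges, which is exactly a non-top linked SCC $\mathcal{N}'$. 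By hypothesis $\mathcal{N}'$ carries an input, and using strong connectivity within each SCC together with the DAG edges traversed one assembles a directed path from that input to $x$, so $x$ is accessible. Combining the three steps yields the stated equivalence.

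The step I expect to be the main obstacle is the second one: carefully matching the perfect-matching hypothesis on $\mathcal{B}(\bar A)$ to the \emph{correct} no-dilation bipartite graph (the one that includes the input vertices as additional left vertices) and verifying that saturation of $\mathcal{X}$ is preserved when passing to the larger graph. The reachability argument in the third step is conceptually routine graph traversal, but some care is needed with the orientation convention (arrows pointing downward in the DAG of Figure~\ref{DAGexample}) so that a backward walk in the DAG corresponds to a genuine forward directed path from the input to $x$ in $\mathcal{D}(\bar A,\bar B)$.
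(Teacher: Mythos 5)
Your proof is correct. Note, however, that the paper itself contains no proof of Proposition~\ref{designPerfectMatching}: it is imported as a known result from \cite{PequitoJournal,PequitoACC13}, so there is no internal argument to compare against; your route---Lin's two-condition characterization of structural controllability (accessibility plus no dilation, i.e., full generic rank of $[\bar A \;\; \bar B]$), discharging the rank condition for \emph{every} $\bar B$ via the perfect matching of $\mathcal B(\bar A)$, and reducing accessibility to the non-top linked SCC condition by a backward walk in the condensation---is precisely the standard derivation used in those references. The two points you flagged as delicate are handled correctly: a perfect matching of $\mathcal B(\mathcal X,\mathcal X,\mathcal E_{\mathcal X,\mathcal X})$ remains a matching of the system bipartite graph $\mathcal B(\mathcal X\cup\mathcal U,\mathcal X,\mathcal E_{\mathcal X,\mathcal X}\cup\mathcal E_{\mathcal U,\mathcal X})$ saturating the right copy of $\mathcal X$, since enlarging the left vertex class and edge set can never invalidate a matching or its saturation of the right side; and the terminal SCC of any backward walk in the finite acyclic condensation has no incoming edges from vertices of other SCCs, which is exactly the notion of non-top linked in Definition~\ref{linkedSCC}, so sufficiency of the input-assignment condition for accessibility follows by stitching the DAG edges with paths inside each (strongly connected) SCC, as you indicate.
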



\vspace{-0.3cm}

\section{Main Results}


In this section, we show that the minCIS presented in $\mathcal P_1$ is NP-hard (Corollary~\ref{minCISNPhard}), by showing that its  decision version, the CIS, is an NP-complete problem (Theorem~\ref{NPcompCMIS}). Then, we identify a subclass of minCIS problems that are polynomially solvable (Theorem~\ref{subclassCMISPoly}).

 We start by showing  that CIS is NP-complete, as provided in the following result.

\begin{theorem}
The constrained  input selection  (CIS) problem presented in $\mathcal P_1^d$ is NP-complete.

\vspace{-0.4cm}
 \hfill $\diamond$
\label{NPcompCMIS}
\end{theorem}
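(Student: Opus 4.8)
The plan is to establish membership in NP and NP-hardness separately, per the standard two-part definition of NP-completeness, exploiting Lemma~\ref{NPcompRed} and Proposition~\ref{proposition1}.

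\textbf{Membership in NP.} First I would argue that CIS is in NP by exhibiting a polynomial-time verifier. Given a candidate $\mathcal{J}\subset\{1,\ldots,p\}$, checking $|\mathcal{J}|\le k$ is trivial, and checking that $(\bar A,\bar B_{\mathcal J})$ is structurally controllable can be done in polynomial time: the standard structural controllability test (via maximum matchings on the appropriate bipartite graph together with a spanning/accessibility condition on the system digraph $\mathcal{D}(\bar A,\bar B_{\mathcal J})$) runs in time polynomial in $n$ and $p$, as recalled in Section~2.1 (maximum matching in $\mathcal O(\sqrt{|\mathcal S_1\cup\mathcal S_2|}\,|\mathcal E|)$ via Hopcroft--Karp, plus linear-time SCC/DAG computation). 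Hence a certificate can be verified efficiently.

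\textbf{NP-hardness via reduction from set covering.} The core of the argument is a polynomial reduction from the set covering (decision) problem, which by Lemma~\ref{NPcompRed} suffices once membership in NP is established. Given an instance of set covering with universe $\mathcal U=\{1,\ldots,m\}$ and sets $\{\mathcal S_j\}_{j=1}^{p}$, I would construct a structural pair $(\bar A,\bar B)$ so that selecting a subcollection of inputs that renders the pair structurally controllable corresponds exactly to selecting a subcollection of sets that covers $\mathcal U$. The natural construction is to let the state vertices correspond to the universe elements, $\mathcal X=\{x_1,\ldots,x_m\}$, and create one input $u_j$ for each set $\mathcal S_j$, with $\bar B_{j,i}\neq 0$ precisely when $i\in\mathcal S_j$ (so input $u_j$ actuates exactly the states indexed by the elements of $\mathcal S_j$). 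The dynamics matrix $\bar A$ must be chosen so that structural controllability reduces to a pure covering condition rather than interacting with the matching/accessibility subtleties. The cleanest choice is to make $\bar A$ such that each state vertex forms its own non-top linked SCC and the state bipartite graph $\mathcal B(\bar A)$ admits a perfect matching — for instance taking $\bar A$ to have self-loops on the diagonal (each $\bar A_{i,i}=\star$) and no other edges. Then by Proposition~\ref{designPerfectMatching}, $(\bar A,\bar B_{\mathcal J})$ is structurally controllable if and only if every non-top linked SCC (here, every single state $x_i$) has at least one selected input assigned to it, i.e.\ every element $i\in\mathcal U$ belongs to some $\mathcal S_j$ with $j\in\mathcal J$. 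This is exactly the covering condition $\bigcup_{j\in\mathcal J}\mathcal S_j=\mathcal U$.

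Putting this together, the set covering instance admits a cover of size at most $k$ if and only if the constructed CIS instance admits an input selection $\mathcal J$ with $|\mathcal J|\le k$ achieving structural controllability, and the construction is clearly polynomial in $m$ and $p$. By Proposition~\ref{proposition1} (with $\mathcal{P}_A$ the NP-hard minimum set covering problem and $\mathcal{P}_B^d$ the CIS), CIS is NP-complete, and consequently minCIS is NP-hard. I expect the main obstacle to be the careful justification that the chosen $\bar A$ genuinely isolates the covering combinatorics: one must verify that the perfect-matching hypothesis of Proposition~\ref{designPerfectMatching} holds for the constructed $\bar A$ (self-loops guarantee this) and that each state is indeed its own non-top linked SCC, so that no accessibility artifacts introduce spurious controllability or extra constraints beyond coverage. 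Ensuring this clean correspondence — rather than the reduction's polynomiality, which is immediate — is the delicate step.
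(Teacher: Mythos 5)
Your proposal is correct and follows essentially the same route as the paper: a diagonal $\bar A$ with self-loops (so every state is its own non-top linked SCC and $\mathcal B(\bar A)$ has a perfect matching), a $\bar B$ encoding set membership, and Proposition~\ref{designPerfectMatching} to turn structural controllability into the covering condition. The only cosmetic difference is that you phrase the correspondence directly as a decision-level equivalence (cover of size at most $k$ iff controllable selection of size at most $k$), whereas the paper argues feasibility and minimality between the two optimization problems before invoking Proposition~\ref{proposition1}; both are valid.
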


\begin{proof}
The proof follows by using Proposition~\ref{proposition1}; more precisely, by presenting the polynomial reduction from the minimum set covering problem to minCIS, and noticing that $\mathcal{P}_{1}^{d}$ is in NP, i.e.,  there exist polynomial algorithms to verify if $(\bar A,\bar B(\mathcal J))$, for some $\mathcal J\subset \{1,\ldots,p\}$, is structurally controllable  \cite{CommaultD13}.

To obtain the polynomial reduction, consider a general minimum  set covering problem instance with sets $\{\mathcal S_i\}_{i\in \mathcal I}$, the index set $\mathcal I =\{1,\ldots, p\}$ and universe $\mathcal U=\bigcup\limits_{i\in\mathcal I} \mathcal S_i$, where $|\mathcal U|=n$. Subsequently, construct $\bar A\in\{0,\star\}^{n\times n}$ to be a diagonal matrix with nonzero entries, i.e.,  $\star$, in its diagonal. Additionally, select $\bar B \in \{0,\star\}^{n\times p}$, such that its $(i',j')$-th entry is given as follows:
\[
\bar B_{i',j'}=\left\{\begin{array}{cc}
\star,& \ \text{ if } i'\in \mathcal S_{j'}\\
0,& \text{ otherwise,}
\end{array}
\right.
\]
for $i'\in\{1,\ldots, n\}$ and $j'\in \{1,\ldots, p\}$.

 Note  that  such $\mathcal D(\bar A)=(\mathcal X,\mathcal E_{\mathcal X,\mathcal X})$,  consists of $n$ non-top linked SCCs and  the associated state bipartite graph $\mathcal{B}(\bar{A})$  has a perfect matching. Now, recall that, by Proposition~\ref{designPerfectMatching}, $(\bar{A},\bar{B}(\mathcal{J}))$, for some $\mathcal J\subset \{1,\ldots,n\}$,  is structurally controllable if and only if  each non-top linked SCC of $\mathcal{D}(\bar{A})$ contains a state variable that is connected from an input (corresponding to a nonzero column in $\bar{B}(\mathcal{J})$).

Subsequently, we first show that a feasible solution to the minCIS leads to a feasible solution of the minimum set covering problem, and secondly, a (minimal) solution to the minCIS leads to  a (minimal) solution of the minimum set covering problem. To show feasibility, let $\bar B(\mathcal J)$, for some $\mathcal J$, be a feasible solution to the minCIS, i.e., $(\bar A,\bar B(\mathcal J))$ is structurally controllable. It then follows that there exists  edges from the inputs associated with indices in $\mathcal J$ to all the state variables (corresponding to the non-top linked SCCs in $\mathcal{D}(\bar{A})$), which implies by the construction of $\bar B$ that the family of subsets $\{\mathcal S_j\}_{j\in\mathcal{J}}$   cover $\mathcal U$.

 To obtain minimality, suppose, on the contrary, that $\mathcal{J}^{\ast}$ constitutes a (minimal) solution to the minCIS, but the family $\{\mathcal{S}_{j}\}_{j\in\mathcal{J}^{\ast}}$ is not a minimum covering of $\mathcal{U}$.   Then, there exists $\mathcal J'\subset \{1,\ldots, p\}$ with $|\mathcal J'|<|\mathcal J^*|$ such that the family $\{\mathcal{S}_{j}\}_{j\in\mathcal{J}^{\prime}}$ covers $\mathcal{U}$. This, in turn, by the construction of $\bar{B}$ and Proposition~\ref{designPerfectMatching} implies that the pair $(\bar{A},\bar{B}(\mathcal{J}^{\prime}))$ is structurally controllable. Since  $|\mathcal J'|<|\mathcal J^*|$, we conclude that $\bar B(\mathcal J^*)$ is not a (minimal) solution to the minCIS, which is a contradiction.
\end{proof}

From Theorem \ref{NPcompCMIS}, we  obtain one of the main results of this paper.

\begin{corollary}\label{minCISNPhard}
 The minimum constrained  input selection (minCIS) problem is NP-hard.
\hfill $\diamond$
\end{corollary}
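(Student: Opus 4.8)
The plan is to obtain the NP-hardness of the minCIS as an immediate consequence of Theorem~\ref{NPcompCMIS}, exploiting the standard correspondence between a minimization problem and its decision version. The guiding principle, already recorded in Proposition~\ref{proposition1} and in the complexity discussion of the introduction, is that whenever the decision version of a minimization problem is NP-complete, the minimization problem itself is NP-hard.

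First I would make explicit that $\mathcal{P}_1^d$ is precisely the decision version of $\mathcal{P}_1$: the CIS asks whether there exists a feasible $\mathcal{J}$ with $|\mathcal{J}| \le k$ rendering $(\bar A, \bar B_{\mathcal{J}})$ structurally controllable, which is exactly the thresholded reformulation of minimizing $|\mathcal{J}|$ subject to structural controllability. In particular, any algorithm solving the minCIS and returning the optimal value $|\mathcal{J}^*|$ settles the CIS by a single comparison of $|\mathcal{J}^*|$ against $k$; hence the minCIS is at least as hard as the CIS.

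Next I would invoke the second conclusion of Proposition~\ref{proposition1}, which was already set up in the proof of Theorem~\ref{NPcompCMIS} by taking $\mathcal{P}_A$ to be the (NP-hard) minimum set covering problem and $\mathcal{P}_B$ to be the minCIS. The polynomial reduction constructed there, together with the efficient verifiability of the CIS, establishes not only that the CIS is NP-complete but also, \emph{in particular}, that the minCIS is NP-hard, which is the desired conclusion.

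There is no substantive obstacle in this step: all of the computational content resides in the reduction of Theorem~\ref{NPcompCMIS}. The only point requiring slight care is to state cleanly that the CIS is genuinely the decision version of the minCIS, so that the NP-completeness already proved transfers, via the decision--optimization correspondence, to NP-hardness of the minimization problem.
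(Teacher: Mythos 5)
Your proposal is correct and follows essentially the same route as the paper: the corollary is obtained directly from Theorem~\ref{NPcompCMIS} via the decision--optimization correspondence, with Proposition~\ref{proposition1} (instantiated with the minimum set covering problem as $\mathcal{P}_A$ and the minCIS as $\mathcal{P}_B$) already yielding ``in particular, $\mathcal{P}_B$ is NP-hard.'' Your explicit remark that an optimal solver for the minCIS settles the CIS by a single threshold comparison is exactly the standard argument the paper leaves implicit.
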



The fact that the minCIS is NP-hard,  however, does not rule out the possibility that there exist subclasses of the minCIS (with restricted input instances) that admit polynomial complexity algorithmic solutions (in the size of the system plant matrices).  In fact, a particularly interesting subclass of the minCIS  is  one in which the collection of inputs  initially given consist of all possible \emph{dedicated inputs}, i.e., the matrix $\bar{B}$  consists of $n$ inputs each of which is assigned to a single distinct state variable. Formally, we have the following result.

\begin{theorem}
Let $\bar A\in \{0,\star\}^{n\times n}$ be a given structural dynamic matrix   and $\bar{B}=\mathbb{I}_n\ $ a $\ n\times n$  diagonal input matrix with nonzero diagonal entries. The problem of determining  $\mathcal J^*$ such that

\begin{align}
\mathcal J^*=\arg\min\limits_{\mathcal J\subset \{1,\ldots,N\}}& \qquad\qquad |\mathcal J| \label{minDedInputSel}\\
\text{s.t. } \quad  (\bar A,&\mathbb{I}_n(\mathcal J)) \text{ is  structurally controllable,} \notag
\end{align}
where $\mathbb{I}_n(\mathcal J)$ corresponds to the columns of $\mathbb{I}_n$ with indices in $\mathcal J$,  referred to as the \emph{minimum dedicated input selection} problem,  can be solved polynomially. More precisely, in $\mathcal O(n^3)$.\hfill $\diamond$
\label{subclassCMISPoly}
\end{theorem}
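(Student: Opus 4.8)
The plan is to reduce the minimum dedicated input selection problem to a problem about covering the non-top linked SCCs of $\mathcal{D}(\bar{A})$, and then show that this covering problem is solvable in polynomial time because of the special structure imposed by the choice $\bar{B}=\mathbb{I}_n$. First I would observe that, since $\bar{B}=\mathbb{I}_n$, there is a dedicated input available for every single state variable, and in particular the full matrix $\mathbb{I}_n$ makes $(\bar{A},\mathbb{I}_n)$ structurally controllable; thus the feasible set is nonempty and the minimization is well posed. The key structural question is which subsets $\mathcal{J}$ of these dedicated inputs suffice for structural controllability.

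The crux is to invoke Proposition~\ref{designPerfectMatching}. However, that result requires the hypothesis that $\mathcal{B}(\bar{A})$ has a perfect matching, which need not hold for an arbitrary $\bar{A}$, so I expect the main obstacle to be handling the case where $\mathcal{B}(\bar{A})$ has no perfect matching. For a general $\bar{A}$ the controllability condition involves not only covering the non-top linked SCCs but also supplying inputs to the vertices left unmatched by a maximum matching of $\mathcal{B}(\bar{A})$. Concretely, I would first compute a maximum matching $M^{\ast}$ of the state bipartite graph $\mathcal{B}(\bar{A})$ (in $\mathcal{O}(\sqrt{n}\,|\mathcal{E}_{\mathcal{X},\mathcal{X}}|)$ time, hence within the claimed $\mathcal{O}(n^3)$ budget) and identify the right-unmatched state vertices, and separately compute the DAG of SCCs (in $\mathcal{O}(n+|\mathcal{E}_{\mathcal{X},\mathcal{X}}|)$ time) to identify the non-top linked SCCs. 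The structural controllability of $(\bar{A},\mathbb{I}_n(\mathcal{J}))$ for dedicated inputs then reduces to two simultaneous requirements: every right-unmatched vertex must receive a dedicated input, and every non-top linked SCC must contain at least one state variable that receives a dedicated input.

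The decisive simplification, which is what makes the problem polynomial rather than merely a disguised set cover, is that with dedicated inputs each input covers exactly one state variable, so there is no combinatorial choice of how to cover a requirement beyond choosing which single state inside a given SCC to actuate. I would argue that an optimal solution places exactly one dedicated input on each non-top linked SCC (choosing, when possible, a state variable that is simultaneously right-unmatched so as to discharge both requirements at once), plus one dedicated input on each remaining right-unmatched vertex not already covered. The counting argument reduces to showing that the minimum number of dedicated inputs equals the number of non-top linked SCCs together with the number of right-unmatched vertices that lie outside any non-top linked SCC (or that cannot be absorbed by an SCC-input), and that this greedy matching-plus-SCC assignment achieves it. Optimality follows because the two requirement classes impose disjoint lower bounds that the construction meets with equality.

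Finally, I would assemble the complexity bound: computing the SCCs and the DAG is linear, computing the maximum matching is $\mathcal{O}(\sqrt{n}\,|\mathcal{E}_{\mathcal{X},\mathcal{X}}|)=\mathcal{O}(n^{2.5})$, and the assignment of inputs to the identified non-top linked SCCs and unmatched vertices is linear in the size of the graph; all steps fit within $\mathcal{O}(n^{3})$. The hardest part of the write-up will be the correctness of the joint covering step, i.e., proving that greedily reusing a single dedicated input to satisfy both an unmatched-vertex constraint and a non-top-linked-SCC constraint never forces a suboptimal choice downstream; I would handle this by an exchange argument showing any optimal solution can be transformed, without increasing its cardinality, into one of the canonical form produced by the algorithm.
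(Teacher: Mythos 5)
There is a genuine gap, and it sits exactly where you predicted the difficulty would be. Your plan fixes \emph{one} maximum matching $M^{\ast}$ of $\mathcal{B}(\bar A)$ and then treats its right-unmatched vertices as mandatory input locations. But the set of right-unmatched vertices is not an invariant of $\bar A$: it depends on which maximum matching is chosen, and the correct necessary and sufficient condition for structural controllability with dedicated inputs at a set $S$ of states is existential, namely that every non-top linked SCC meets $S$ \emph{and} that there exists \emph{some} maximum matching of $\mathcal{B}(\bar A)$ whose right-unmatched vertices all lie in $S$. Fixing an arbitrary matching destroys optimality. Concrete counterexample: $n=2$, $\bar A_{11}=\bar A_{21}=\star$, all other entries zero. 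The SCCs are $\{x_1\}$ (non-top linked, due to the self-loop) and $\{x_2\}$. The bipartite graph $\mathcal{B}(\bar A)$ has two maximum matchings, $\{(x_1,x_1)\}$ with right-unmatched vertex $x_2$, and $\{(x_1,x_2)\}$ with right-unmatched vertex $x_1$. If your algorithm happens to compute the first one, your two requirement classes are $\{x_2\}$ and $\{x_1\}$, they cannot be merged, and you output two dedicated inputs; yet a single dedicated input at $x_1$ suffices (choose the second matching), and indeed $u\to x_1\to x_2$ with a self-loop at $x_1$ is structurally controllable. So your claimed ``disjoint lower bounds'' are not lower bounds at all, and no exchange argument confined to a fixed matching can repair this: the exchange must range over the set of all maximum matchings, i.e., one must maximize, over maximum matchings, the number of right-unmatched vertices falling in distinct non-top linked SCCs. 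That maximization is itself a nontrivial combinatorial problem (solvable by a further matching computation on an augmented bipartite graph), and it is precisely the content your sketch is missing.

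For comparison, the paper does not attempt a from-scratch algorithm at all: it observes that the minimum dedicated input selection problem \eqref{minDedInputSel} is equivalent to the sparsest dedicated-input design problem \eqref{minDedInputSelSparse}, which was already shown to be polynomially solvable (in $\mathcal{O}(n^3)$) in the cited prior work \cite{PequitoJournal,PequitoACC13}; the proof is then just the observation that a minimizer of \eqref{minDedInputSelSparse} has the form $[\mathbb{I}_n^{\mathcal J}\ \mathbf{0}]$ up to permutation, from which a minimizer of \eqref{minDedInputSel} is read off. The machinery that handles the matching-dependence described above (the ``maximum assignability'' of right-unmatched vertices to non-top linked SCCs) lives inside that cited result. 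Your high-level decomposition into the two requirement classes is the right intuition and matches Proposition~\ref{designPerfectMatching}'s spirit, but as written the optimality claim is false, so the proof does not go through without importing or reproving that additional optimization step.
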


\begin{proof} See Appendix. 
\end{proof}

In Theorem~\ref{subclassCMISPoly}, upon a restriction in $\bar B$, we obtained a subclass of minCIS problems that can be solved polynomially. Next, we impose some restrictions in $\bar A$, and we show that the problem can be systematically solved by resorting to a minimum set covering problem. 


\section{Partial Polynomial Reduction  of the minCIS to  the Minimum Set Covering Problem}

In Section 3 we have showed that $\mathcal P_1^d$ is an NP-complete problem  without explicitly deriving a polynomial reduction from $\mathcal P_1^d$ to an NP-complete problem, or equivalently, without  explicitly deriving a polynomial reduction from minCIS to another (standard or known) NP-hard problem. In this section, we provide a \emph{partial} polynomial reduction from the minCIS  to  the minimum set covering problem (see Theorem \ref{mainresult2} below). By  partial reduction we mean that it is only valid if the  state digraph satisfies certain additional properties, to be made precise in  Assumption~1. Notably,  the set of state digraphs satisfying Assumption 1 for which the proposed reduction holds, include dynamical systems commonly encountered in multi-agent networked control applications  (see Section 4.1 for details). Further, in Section 4.2 we show how the polynomial reduction obtained in Section 4.1 can be used to  solve leader-selection problems in multi-agent networks.

Throughout this section, we assume that the system dynamic matrices, i.e., the $\bar{A}$ matrices in the minCIS, satisfy the following condition.

\noindent \textbf{Assumption 1} The structural dynamic matrix $\bar{A}$ is such that the state bipartite graph   $\mathcal  B(\bar A)=\mathcal B(\mathcal X,\mathcal X,\mathcal E_{\mathcal X,\mathcal X})$ associated with $\bar{A}$, has a perfect matching. In other words, the set of right-unmatched vertices associated with any maximum matching of $\mathcal{B}(\bar{A})$ is empty.\hfill $\diamond$

\begin{remark}[\cite{PequitoJournal,PequitoACC13}]
In fact,  Assumption~1 can be interpreted in terms of the state digraph as follows: the state bipartite graph $\mathcal B(\bar A)$ has a perfect matching if and only if $\mathcal D(\bar A)$ is spanned by a disjoint union of cycles, or, alternatively,  it corresponds to a structural matrix such that \emph{almost all} of its numerical instances are full rank.\hfill $\diamond$
\end{remark}

We now provide a polynomial reduction from the minCIS to the minimum set covering problem under \mbox{Assumption~1}.

\begin{theorem}
Consider the minCIS problem with system matrix instance  $\bar A\in\{0,\star\}^{n\times n}$ and input matrix $\bar{B}\in\{0,\star\}^{n\times p}$, where $\bar{A}$ satisfies Assumption 1. Denote by  $\mathcal N^i$, $i=1,\ldots, k$, the $k$ non-top linked SCCs of $\ \mathcal{D}(\bar{A})$.  The minCIS problem  can then be polynomially reduced to the minimum set covering problem with universe  $\mathcal U= \{1,\ldots,k\}$ and   sets  $\{\mathcal S_j\}_{j=1,\ldots, p}$,  where $ \mathcal S_{j}=\{i\in \mathcal U : \ \bar B_{r,j}=\star ,\  x_r \in \mathcal N^{i} \}$.
\label{mainresult2}
\hfill $\diamond$
\end{theorem}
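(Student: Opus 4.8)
The plan is to exhibit an explicit, polynomially computable map between instances and to show that, under this map, the feasible sets of the two problems coincide with identical objective values, so that an optimal solution of one yields an optimal solution of the other. The central tool is Proposition~\ref{designPerfectMatching}, which applies precisely because Assumption~1 guarantees that $\mathcal B(\bar A)$ has a perfect matching.

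First I would address the construction and its cost. Given $\mathcal D(\bar A)$, compute its SCCs and the associated DAG in $\mathcal O(|\mathcal X|+|\mathcal E_{\mathcal X,\mathcal X}|)$ time (as recalled in Section~2.1), identify the $k$ non-top linked SCCs $\mathcal N^1,\ldots,\mathcal N^k$, and for each input index $j\in\{1,\ldots,p\}$ scan the $j$-th column of $\bar B$ to form $\mathcal S_j=\{i\in\mathcal U:\ \bar B_{r,j}=\star \text{ for some } x_r\in\mathcal N^i\}$. This produces the set covering instance $(\mathcal U,\{\mathcal S_j\}_{j=1}^p)$ with $\mathcal U=\{1,\ldots,k\}$ in a number of operations polynomial in $n$ and $p$, which establishes that the reduction is polynomial.

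Next I would establish the equivalence of feasible solutions. Fix $\mathcal J\subset\{1,\ldots,p\}$. By the definition of $\mathcal S_j$, an input $j$ is assigned to at least one state variable of the non-top linked SCC $\mathcal N^i$ if and only if $i\in\mathcal S_j$; hence each non-top linked SCC $\mathcal N^i$ is connected from some input in $\mathcal J$ if and only if $i\in\bigcup_{j\in\mathcal J}\mathcal S_j$, i.e. if and only if $\bigcup_{j\in\mathcal J}\mathcal S_j=\mathcal U$. Since Assumption~1 ensures the perfect-matching hypothesis of Proposition~\ref{designPerfectMatching}, that proposition yields that $(\bar A,\bar B_{\mathcal J})$ is structurally controllable if and only if every non-top linked SCC is connected from an input in $\mathcal J$. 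Combining the two equivalences, $(\bar A,\bar B_{\mathcal J})$ is structurally controllable if and only if $\{\mathcal S_j\}_{j\in\mathcal J}$ covers $\mathcal U$; in particular, feasibility of the full minCIS (the case $\mathcal J=\{1,\ldots,p\}$) corresponds exactly to $\bigcup_{j=1}^p\mathcal S_j=\mathcal U$, so the derived set covering instance is well posed whenever the minCIS is feasible.

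Finally, because both problems minimise the same quantity $|\mathcal J|$ over this common feasible family, their optimal values and optimal arguments coincide: a minimum cover $\mathcal I^*$ gives $\mathcal J^*=\mathcal I^*$ as a minimum input selection, and conversely. I expect the only delicate point to be the careful translation of the graph-theoretic controllability condition of Proposition~\ref{designPerfectMatching} into the covering condition on $\{\mathcal S_j\}$, together with the bookkeeping that feasibility is preserved in both directions; the remaining steps are mechanical and mirror the feasibility/minimality argument already used in the proof of Theorem~\ref{NPcompCMIS}.
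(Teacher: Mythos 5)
Your proof is correct and takes essentially the same approach as the paper: a polynomial construction of the covering instance via the SCC/DAG decomposition, followed by correctness through Proposition~\ref{designPerfectMatching}, which applies because Assumption~1 guarantees a perfect matching in $\mathcal B(\bar A)$. If anything, your explicit two-way equivalence between feasible input selections and covers is slightly tighter than the paper's argument, which proves minimality by contradiction, spells out only the direction ``structurally controllable $\Rightarrow$ cover,'' and leaves the converse (needed for feasibility of the solution recovered from the minimum cover) implicit.
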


\begin{proof}
The proof requires two steps: 1) to show that the stated  reduction to the set covering problem can be achieved by performing a polynomial number of operations  with respect to the size of $\bar{A}$ and $\bar{B}$; and 2) to prove the correctness of the reduction, i.e., to show that, under Assumption~1, the solution to the minCIS can be readily determined from the minimal solution of the set covering problem.

The proposed  reduction is polynomial since the non-top linked SCCs of $\mathcal D(\bar A)$  can be determined polynomially, for instance,  by computing the DAG associated with $\mathcal D(\bar A)$ (see Section 2.1). Subsequently,    the   sets $\mathcal S_j$ and the universe  $\mathcal U$, constituting the minimum set covering problem,  can be constructed with linear complexity in the number of state variables in $\mathcal D(\bar A)$.  

To show correctness,  suppose, on the contrary, we  have $\mathcal J^*\subset \{1,\ldots, p\}$ such that $\{\mathcal{S}_{j}\}_{j\in\mathcal{J}^{\ast}}$ is a (minimal)  solution to the minimum set covering problem, and $B(\mathcal J^*)$ is not a (minimal) solution to the minCIS. Hence, there exists $\mathcal J^{-}\subset \{1,\ldots, p\}$, with $|\mathcal J^{-}|<|\mathcal J^*|$,  such that $\bar B(\mathcal J^{-})$ is a solution to minCIS. Now note that since $\bar{A}$ satisfies Assumption~1, the bipartite graph $\mathcal{B}(\bar{A})$ consists of a perfect matching, and hence, by Proposition~\ref{designPerfectMatching}, for each non-top linked SCC $\mathcal{N}^{i}$ of $\mathcal{D}(\bar{A})$, there exists an input 
corresponding to an index in $\mathcal{J}^{-}$ that is assigned to a state variable in $\mathcal{N}^{i}$. 

 Thus,  by construction of the minimum set covering problem, the family  $\{\mathcal S_{l}\}_{l\in \mathcal J^-}$ covers $\mathcal U=\{1,\ldots, k\}$. Since   $|\mathcal J^{-}|<|\mathcal J^*|$,  it follows that  the  family $\{\mathcal{S}_{j}\}_{j\in\mathcal{J}^{\ast}}$  is not a minimal set covering of $\mathcal U$, a contradiction.
\end{proof}

In the next section, we introduce a class of multi-agent networked control problems, referred to as leader-selection problems. Further,  we explain how  the reduction obtained in Theorem~\ref{mainresult2} can be used to solve these  leader-selection problems.

\section{Illustrative Example}

{

To illustrate the results established in Section~4, we introduce two (structural) variants of leader-selection problems  stated in~\cite{Rahmani}, namely, (i) the  structural  (unconstrained) leader-selection, and (ii) the structural  constrained leader-selection, as presented next  in $\bar{\mathcal L}_1$ and $\bar{\mathcal L}_2$ respectively. 
We will also show that although the proposed method to solve both problems requires the solution of a set covering problem, problem $\bar{\mathcal L}_1$  is considerably easier to solve than $\bar{\mathcal L}_2$; more precisely, although the
set covering problem is in general dificult to solve, the class of problems in $\bar{\mathcal L}_1$ and the associated instances of the minimum set covering problems  can be solved by resorting to polynomial algorithms.

%

The \emph{structural  (unconstrained) leader-selection} problem can be posed as follows:
Consider a multi-agent network consisting of $N$ agents, where each agent $i$ has the ability to transmit scalar data to its neighbors and perform updates given by a linear combination of the states it receives as well its own.  Let $\bar W\in\{0,\star \}^{N\times N}$ denote the sparsity induced by such linear combination rules, and $\mathbb{I}_N=\text{diag}(\star,\ldots, \star)\in\{0,\star \}^{N\times N}$ a structural pattern of a  diagonal matrix without zeros on it;  further, we assume that $\bar W$ has nonzero diagonal entries. In addition, let each agent be equipped with an input that only actuates directly  its own state, i.e., a \emph{dedicated input}, which can be represented by letting the input matrix to be $\mathbb{I}_N$. The structural  (unconstrained) leader-selection problem aims to determining the minimum collection of agents that are required to use their inputs to ensure structural controllability. Formally, we have the following problem:

$\bar{\mathcal L}_1$ Determine $ \mathcal J^*$ where
\begin{align}
 \mathcal J^*=\arg\min\limits_{\mathcal J\subset \{1,\ldots,N\}}& \qquad\qquad |\mathcal J| \label{leaderSel} \\
\text{s.t. }  (\bar W& ,\mathbb{I}_N(\mathcal J)) \text{ is structurally controllable. }\notag
\end{align}

Alternatively, in the \emph{structural  (constrained) leader-selection} problem, we can consider similar dynamics structure $\bar W'\in\{0,\star \}^{N\times N}$ (assumed with non-zero diagonal entries), but instead of considering that each agent is equipped with a dedicated input, we assume that they receive input signals from external entities. These entities, can be understood as leaders  labelled as $\mathcal{L}=\{1,\cdots,L \}$, corresponding to the set of $L$ potential leaders  whose goal is to control the collection of $N$ followers, in this case the agents. Furthermore, denote by $\bar B\in\mathbb{R}^{N\times L}\in\{0,\star\}$ the structure of the input matrix representing the actuation exercised by  the potential leader agents, i.e., the entry $B_{f,l}$ indicates how leader $l\in \mathcal L$ actuates the follower $f \in\{1,\cdots,N\}$. Finally, given a subset $\mathcal{J}\subset\mathcal{L}$, $\bar B(\mathcal{J})$ denotes the collection of columns in $\bar B$ corresponding to indices in $\mathcal{J}$. The structural  (constrained) leader-selection problem can be posed as follows:

$\bar{\mathcal L}_2$ Determine $ \mathcal J^*$ where
\begin{align}
 \mathcal J^*=\arg\min\limits_{\mathcal J\subset \mathcal L}& \qquad\qquad\qquad |\mathcal J|\label{constLeaderSel} \\
\text{s.t. }\quad & (\bar W',\bar{B}(\mathcal{J})) \text{ is structurally controllable. }\notag
\end{align}

}

We now show that $\bar{\mathcal L}_1,\bar{\mathcal L}_2$ can be solved using  set covering problems by employing the reduction developed in Theorem \ref{mainresult2}. 

\begin{proposition}
The structural dynamics matrices $\bar W,\bar W'\in \{0,\star\}^{N\times N}$ associated with the leader-selection problems $\bar{\mathcal{L}}_1$,  $\bar{\mathcal{L}}_2$ satisfy Assumption 1.\hfill $\diamond$
\label{multiagents}
\end{proposition}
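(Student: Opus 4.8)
The plan is to verify Assumption~1 directly for both $\bar W$ and $\bar W'$ by exhibiting an explicit perfect matching in the associated state bipartite graph, rather than appealing to any general matching existence argument. Recall that Assumption~1 requires the state bipartite graph $\mathcal B(\bar A)=\mathcal B(\mathcal X,\mathcal X,\mathcal E_{\mathcal X,\mathcal X})$ to admit a perfect matching. The key structural feature to exploit is the hypothesis, stipulated in the statements of $\bar{\mathcal L}_1$ and $\bar{\mathcal L}_2$, that the diagonal entries of both $\bar W$ and $\bar W'$ are all nonzero (equal to $\star$).

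First I would translate this diagonal condition into the bipartite setting. By the definition $\mathcal E_{\mathcal X,\mathcal X}=\{(x_i,x_j):\bar A_{j,i}\neq 0\}$, each nonzero diagonal entry $\bar A_{i,i}=\star$ contributes the edge $(x_i,x_i)$ to $\mathcal E_{\mathcal X,\mathcal X}$; note that the index-transpose in the definition is immaterial for diagonal entries, since there $j=i$. Hence the set of self-loop edges $M=\{(x_i,x_i):i=1,\ldots,N\}$ is contained in $\mathcal B(\bar A)$ for each $\bar A\in\{\bar W,\bar W'\}$.

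Next I would argue that $M$ is in fact a perfect matching. Each edge $(x_i,x_i)$ joins the left copy of $x_i$ to the right copy of $x_i$; since the indices are distinct, no two of these edges share a vertex, so $M$ is a matching, and moreover every left and every right vertex is incident to exactly one edge of $M$. Thus there are no free (right-unmatched) vertices, and $M$ is perfect. This establishes the perfect matching property for both $\bar W$ and $\bar W'$, so Assumption~1 holds in each case. Equivalently, one may invoke the Remark following Assumption~1: each nonzero diagonal entry corresponds to a self-loop in $\mathcal D(\bar A)$, i.e.\ a cycle of length one, and the collection of all such self-loops spans $\mathcal D(\bar A)$ by a disjoint union of cycles, which is precisely the characterization of $\mathcal B(\bar A)$ possessing a perfect matching.

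I do not anticipate any genuine obstacle here, since the conclusion is an immediate consequence of the nonzero-diagonal hypothesis; the only point requiring care is to confirm that the diagonal of $\bar A$ maps to self-loops under the index convention used in defining $\mathcal E_{\mathcal X,\mathcal X}$, which, as noted, it does trivially. The argument is identical for $\bar W$ and $\bar W'$, so a single self-loop matching construction covers both problems $\bar{\mathcal L}_1$ and $\bar{\mathcal L}_2$ simultaneously.
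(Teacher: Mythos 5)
Your proof is correct and takes essentially the same route as the paper: both arguments observe that the nonzero diagonal entries of $\bar W$ and $\bar W'$ yield self-loops at every state vertex, and that the resulting set $M=\{(x_i,x_i):i=1,\ldots,N\}$ is a perfect matching of the state bipartite graph $\mathcal B(\bar A)$, so Assumption~1 holds. Your added care about the index convention for diagonal entries and the alternative cycle-spanning interpretation are minor elaborations of the same argument, not a different approach.
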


\begin{proof}
Let $\bar A\in\{0,\star\}^{N\times N}$ denote the structural matrix $\bar W$ or $\bar W'$ (depending on which problem we consider). The  proof follows by noticing that  $\mathcal D(\bar A)$  consists of  self-loops on all the state vertices, corresponding to the nonzero diagonal entries in $\bar{A}$. Consequently,  the matching  $M^*=\{ (x_i,x_i)$, $i=1,\ldots, n\}$ is  a maximum matching associated with the state bipartite graph $\mathcal B(\bar A)$, which is  a perfect matching. In other words,  the set of right-unmatched vertices of $\mathcal{B}(\bar{A})$  is empty, and hence Assumption 1 holds.
\end{proof}

Because Assumption 1 holds for the problems $\bar{\mathcal L}_1$ and $\bar{\mathcal L}_2$, by invoking Theorem \ref{mainresult2}, it follows that  we can  solve the structural  leader-selection problems  using a minimum set covering problem. 

\begin{corollary}
The problems $\bar{\mathcal L}_1,\bar{\mathcal L}_2$ can be polynomially reduced to minimum set covering problems as given in Theorem~\ref{mainresult2}. 
\hfill $\diamond$
\label{multiagentSetcover}
\end{corollary}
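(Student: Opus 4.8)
The plan is to prove Corollary~\ref{multiagentSetcover} by straightforwardly combining the two preceding results, since the corollary is essentially an immediate consequence. First I would invoke Proposition~\ref{multiagents}, which establishes that both structural dynamics matrices $\bar W$ and $\bar W'$ satisfy Assumption~1: their state digraphs carry self-loops on every vertex (from the nonzero diagonal entries), so the matching $M^*=\{(x_i,x_i) : i=1,\ldots,N\}$ is a perfect matching of the state bipartite graph, leaving no right-unmatched vertices. Having verified the hypothesis of Theorem~\ref{mainresult2}, the reduction machinery of that theorem applies verbatim.

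Next I would apply Theorem~\ref{mainresult2} separately to each problem. For $\bar{\mathcal L}_1$, the relevant pair is $(\bar W,\mathbb I_N)$; for $\bar{\mathcal L}_2$, it is $(\bar W',\bar B)$. In each case, one computes the DAG of the corresponding state digraph, identifies its non-top linked SCCs $\mathcal N^1,\ldots,\mathcal N^k$, and sets up the minimum set covering instance with universe $\mathcal U=\{1,\ldots,k\}$ and sets $\mathcal S_j=\{i\in\mathcal U : \bar B_{r,j}=\star,\ x_r\in\mathcal N^i\}$, where $\bar B$ is instantiated as $\mathbb I_N$ (for $\bar{\mathcal L}_1$) or as the leader actuation matrix (for $\bar{\mathcal L}_2$). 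Theorem~\ref{mainresult2} then guarantees that a minimal set cover yields a minimal input/leader selection, which is exactly the content of the corollary.

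The only subtlety worth flagging is that the theorem must be instantiated with the correct input matrix in each case, so I would state explicitly which matrix plays the role of $\bar B$ for $\bar{\mathcal L}_1$ versus $\bar{\mathcal L}_2$; beyond this bookkeeping there is no genuine obstacle, as the heavy lifting (polynomiality of the reduction and its correctness) is already carried out in Theorem~\ref{mainresult2}. Concretely, the proof would read as follows.

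\begin{proof}
By Proposition~\ref{multiagents}, the structural dynamics matrices $\bar W$ and $\bar W'$ associated with $\bar{\mathcal L}_1$ and $\bar{\mathcal L}_2$ both satisfy Assumption~1. Consequently, Theorem~\ref{mainresult2} applies to each problem. For $\bar{\mathcal L}_1$, taking $\bar A=\bar W$ and $\bar B=\mathbb I_N$, the minCIS instance \eqref{leaderSel} reduces to the minimum set covering problem with universe $\mathcal U=\{1,\ldots,k\}$, where $\mathcal N^1,\ldots,\mathcal N^k$ are the non-top linked SCCs of $\mathcal D(\bar W)$, and sets $\mathcal S_j=\{i\in\mathcal U : (\mathbb I_N)_{r,j}=\star,\ x_r\in\mathcal N^i\}$. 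Similarly, for $\bar{\mathcal L}_2$, taking $\bar A=\bar W'$ and $\bar B$ the leader actuation matrix, the minCIS instance \eqref{constLeaderSel} reduces to the minimum set covering problem with universe $\mathcal U=\{1,\ldots,k'\}$, where $\mathcal N^1,\ldots,\mathcal N^{k'}$ are the non-top linked SCCs of $\mathcal D(\bar W')$, and sets $\mathcal S_j=\{i\in\mathcal U : \bar B_{r,j}=\star,\ x_r\in\mathcal N^i\}$. In both cases, Theorem~\ref{mainresult2} ensures that the reduction is polynomial and that a minimal solution of the set covering problem yields a minimal solution of the corresponding leader-selection problem.
\end{proof}
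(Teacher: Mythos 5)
Your proof is correct and follows exactly the paper's route: verify Assumption~1 via Proposition~\ref{multiagents} and then invoke Theorem~\ref{mainresult2} for each of $\bar{\mathcal L}_1$ (with $\bar A=\bar W$, $\bar B=\mathbb{I}_N$) and $\bar{\mathcal L}_2$ (with $\bar A=\bar W'$ and the leader actuation matrix). Your explicit bookkeeping of which matrix plays the role of $\bar B$ in each instantiation is a helpful clarification, but the argument is substantively identical to the paper's.
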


Now, consider the system state digraphs depicted in Figure~\ref{multiagents}. The agent states are depicted by black vertices (labeled as $x_i$, $i=1,\ldots,9$), and  the inter-agent dynamical coupling by the black directed edges. Furthermore, consider potential input vertices depicted by blue vertices (labeled as $u_i$, $i=1,\ldots,4$), where we have the following two cases: in Figure~\ref{multiagents} a) we pose the structural unconstrained leader-selection problem, whereas, in Figure~\ref{multiagents} b) , we consider a structural constrained leader-selection problem, in which  the blue directed edges (from the inputs to the agents' states) represent which leaders can actuate which agents.

Hereafter, we illustrate how, both  the structural leader-selection problems can be solved using the polynomial reduction developed in Theorem~\ref{mainresult2}  (see also Corollary 3).

\subsubsection*{Structural (Unconstrained) Leader Selection Problem} 

The goal is  to solve the leader-selection problem $\bar{\mathcal{L}}_{1}$ as formulated in \eqref{leaderSel} with the structure of the dynamics matrix induced by the state digraph represented by the black vertices and edges as depicted in Figure~\ref{multiagents} a). To this end, note that, by Proposition 1 and Corollary 3,  $\bar{\mathcal{L}}_{1}$ can be reduced to  a set covering problem (see  Theorem~\ref{mainresult2}). From Theorem \ref{mainresult2}, to set up the set covering problem, we obtain $\mathcal S_{l}=\emptyset$ for $l\in\{1,\ldots, 9\}$ since none of the (potential)  inputs $u_1,\ldots, u_9$, i.e. the dedicated inputs assigned to agents 1 to 9 respectively,    are  assigned to  variables in  non-top linked SCCs. In addition, $\mathcal S_{10}=\{1\}$, $\mathcal S_{11}=\{2\}$, $\mathcal S_{12}=\{3\}$, $ \mathcal S_{13}=\{4\}$  , where each set comprises the index of the non-top linked SCC it belongs to, and  subsequently the universe $\mathcal U=\{1,2,3,4\}$. It is readily seen that  the solution  to the set covering problem is unique and comprises the sets $\mathcal S_{l'}$, with $l'\in \{10,11,12, 13\}$. Hence, from the viewpoint of leader-selection, agents 10 to 13 should be designated as leaders, which uniquely solves the leader-selection problem. Thus, an input must  be assigned to the state variables $x_{l'}$ ($l'\in \{10,11,12,  13\}$), as depicted in Figure \ref{multiagents} a) by the blue vertices.  It is important to note that in general the set covering problems resulting from structural unconstrained leader-selection problems have the characteristic that the sets $\mathcal{S}_{l}$'s  comprise at most a single state variable. It is readily seen that such instances of the set covering problem may be solved using polynomial complexity algorithms (recall the set covering problem is NP-complete in general); in fact, to cover the universe, we only need to consider a  set for each of the elements in the universe. This is in accordance with the fact that \eqref{minDedInputSel} can be solved using a polynomial complexity algorithm (see Theorem~\ref{subclassCMISPoly}).

\begin{figure}[t]
\centering
\includegraphics[scale=0.25]{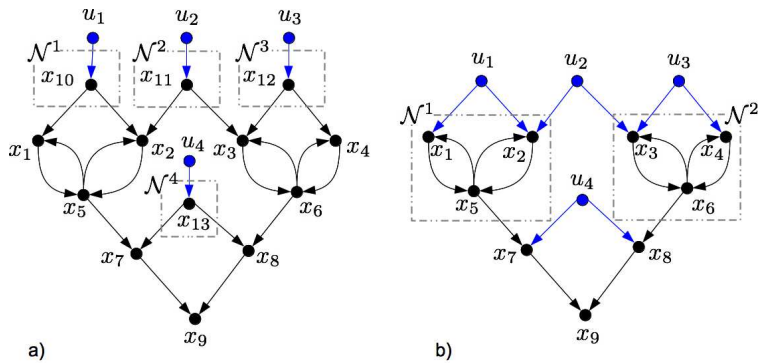}
\caption{The non-top linked SCCs are depicted by gray dashed boxes and  all agents have self-loops (not drawn to keep the illustration simple). In a) we depict the inter-agent communication graph  (the agents are depicted by black vertices with associated states as labels) given by the black edges. In addition potential leaders   (the vertices $u_{1}, u_{2}, u_{3}$ depicted in  blue) are shown to which a dedicated input may be assigned. Alternatively, in b) we depict a communication graph and possible locations for leaders (the vertices $u_{1}, u_{2}, u_{3}$ depicted in blue). }
\label{multiagents}
\end{figure}

\subsubsection*{Structural Constrained Leader Selection Problem}

Now consider  the constrained leader-selection problem $\bar{\mathcal{L}}_{2}$ as formulated in \eqref{constLeaderSel}, with the state digraph induced by the structural dynamics matrix given by  the black vertices and edges as depicted in Figure~\ref{multiagents} b) and the set of potential leaders depicted by the blue vertices. Additionally, the set of followers actuated by the potential leaders is depicted by the blue edges, i.e.,   $\bar B\in\{0,\star\}^{9\times 4}$ with all entries equal to zero except:  $\bar B_{1,1}=\bar B_{2,1}=\star$ corresponding to input $u_1$ assigned to state variables $x_1$ and $x_2$ respectively and, similarly, $\bar B_{2,2}=\bar B_{3,2}=\star$, $\bar B_{3,3}=\bar B_{4,3}=\star$, $\bar B_{7,4}=\bar B_{8,4}=\star$. Now note that, by Proposition 1 and Corollary 3, $\bar{\mathcal{L}}_{2}$ can be reduced to a set covering problem (see Theorem~\ref{mainresult2}).  From Theorem~\ref{mainresult2}, to set up the set covering problem, we obtain $\mathcal S_1=\{1\}$, $\mathcal S_2=\{1,2\}$, $\mathcal S_3=\{2\}$ and $\mathcal S_4=\emptyset$. In other words, agent 1 can only actuate followers from the non-top linked SCC $\mathcal N^1$, agent 2 can actuate followers from the non-top linked SCCs $\mathcal N^1, \mathcal N^2$ and so on. Additionally, the universe  is  $\mathcal U=\{1,2\}$ and  in this particular example  (note that in general the minimum  set covering problem is NP-hard), it is straightforward to see that the solution of  the set covering problem consists of the set $\mathcal S_2$ only. Thus agent 2 should be designated as the leader, which is the solution to the structural constrained leader-selection problem.

\section{Conclusions and Further Research}\label{conclusions}

In this paper, we have showed that the decision version of the minimum constrained  input selection (minCIS) problem is NP-complete; hence, the minCIS is NP-hard. Consequently,  in general, efficient (polynomial complexity) solution procedures to the minCIS are unlikely to exist. Nevertheless, we have identified one  subclass  of problems, of interest for control systems applications, where the minCIS is efficiently solvable, namely, minCIS instances with dedicated inputs, which can be solved polynomially. The NP-completeness of the decision version of the minCIS further implies that it is polynomially reducible to other NP-complete problems. Subsequently, for a restricted subclass of minCIS problems, which subsumes practically relevant multi-agent networked control applications such as leader-selection problems, we have explicitly constructed a polynomial reduction from the minCIS to the minimum set covering problem.   As future research, it may be worthwhile to obtain reductions from more general instances of the minCIS to other standard NP-hard problems, notably the ones with good approximation guarantees, such as the MAX-SAT -- the optimization version of the SAT problem~\cite{Garey:1979:CIG:578533}.

\section*{Appendix}
{
To prove Theorem~\ref{subclassCMISPoly}, we first introduce and review some of the results presented in  \cite{PequitoJournal,PequitoACC13}. More precisely, consider the \emph{minimal structural controllability problem} stated as follows: Given $\bar A\in \{0,\star\}^{n\times n}$, determine $\bar B^* $ such that
\begin{align}
\bar B^*=\arg\min\limits_{\bar B^* \in \{0,\star\}^{n\times n}}& \qquad\qquad \|\bar B\|_0 \label{minDedInputSelSparse}\\
\text{s.t. }\quad & (\bar A,\bar B) \text{ is  structurally controllable } \notag\\
& \|\bar B_{.,j} \|_0\le 1,\quad j=1,\ldots, n,\notag
\end{align}
where $\bar B_{.,j}$ corresponds to the $j$-th columns of $\bar B$ and $\|M\|_0$ counts the number of nonzero entries in the matrix $M\in\{0,\star\}^{n_1\times n_2}$.  

The problem \eqref{minDedInputSelSparse} (in fact, a more general variant of \eqref{minDedInputSelSparse}) was shown to be polynomially solvable in \cite{PequitoJournal,PequitoACC13}, from which we readily conclude that the minimum dedicated input selection (and output selection, by duality) is polynomially solvable.  
Further, we note that the sparsity minimization objective (as in \eqref{minDedInputSelSparse}) is not generally equivalent to the minCIS, which is consistent with the fact that the minCIS general instance is NP-hard, whereas, the sparsest input/output design problems addressed in \cite{PequitoJournal} are polynomially solvable. Nevertheless, we can use~\eqref{minDedInputSelSparse} to prove Theorem~\ref{subclassCMISPoly} as follows.

\textit{Proof of Theorem~\ref{subclassCMISPoly}}: The proof follows by noticing that a solution to~\eqref{minDedInputSelSparse}, is of the form $\bar B^*=[\mathbb{I_n^{\mathcal J}} \ \mathbf{0}_{n\times (n-|\mathcal J|)}]$ (up to permutation), where  $\mathbb{I}_n^{\mathcal J}$ corresponds to the columns of $\mathbb{I}_n$ with indices in $\mathcal J$, and $\mathbf{0}_{n\times (n-|\mathcal J|)}$ is the $n\times (n-|\mathcal J|)$ matrix of zeros. Further, we have that $\|\bar B^*\|_0=|\mathcal J|$, and since $\bar B^*$ is a solution to \eqref{minDedInputSelSparse}, it follows that $|\mathcal J|$ is minimum. Consequently, $(\bar A,\mathbb{I}_n^{\mathcal J})$ in \eqref{minDedInputSelSparse} is structurally controllable, and it readily follows that $(\bar A,\mathbb{I}_n(\mathcal J))$ in \eqref{minDedInputSel} is structurally controllable. Because, by definition, $\mathbb{I}_n^{\mathcal J}$ in~\eqref{minDedInputSelSparse}  is the same as $\mathbb{I}_n(\mathcal J)$ in \eqref{minDedInputSel}, the minimality in the latter holds. Hence, from a minimal solution to~\eqref{minDedInputSelSparse}, it is possible to retrieve a minimal solution to~\eqref{minDedInputSel}.

}

\bibliographystyle{plain}        
\bibliography{automatica13}           

\begin{thebibliography}{10}

\bibitem{CommaultD13}
Christian Commault and Jean-Michel Dion.
\newblock Input addition and leader selection for the controllability of
  graph-based systems.
\newblock {\em Automatica}, 49(11):3322--3328, 2013.

\bibitem{Coo71}
Stephen~A. Cook.
\newblock The complexity of theorem-proving procedures.
\newblock In {\em Proceedings of the third annual ACM symposium on Theory of
  computing}, STOC '71, pages 151--158, New York, NY, USA, 1971. ACM.

\bibitem{Cormen}
Thomas~H. Cormen, Clifford Stein, Ronald~L. Rivest, and Charles~E. Leiserson.
\newblock {\em Introduction to Algorithms}.
\newblock McGraw-Hill Higher Education, 2nd edition, 2001.

\bibitem{dionSurvey}
Jean-Michel Dion, Christian Commault, and Jacob~Van der Woude.
\newblock Generic properties and control of linear structured systems: a
  survey.
\newblock {\em Automatica}, pages 1125--1144, 2003.

\bibitem{Magnus}
Magnus Egerstedt.
\newblock {Complex networks: Degrees of control}.
\newblock {\em Nature}, 473(7346):158--159, May 2011.

\bibitem{Garey:1979:CIG:578533}
Michael~R. Garey and David~S. Johnson.
\newblock {\em Computers and Intractability: A Guide to the Theory of
  NP-Completeness}.
\newblock W. H. Freeman \& Co., New York, NY, USA, 1979.

\bibitem{IlicLe2008}
M.D. Ilic, L.~Xie, U.A. Khan, and J.M.F. Moura.
\newblock Modeling of future cyber physical energy systems for distributed
  sensing and control.
\newblock {\em IEEE Transactions onSystems, Man and Cybernetics, Part A:
  Systems and Humans}, 40(4):825 --838, July 2010.

\bibitem{MesbahiEgerstedt}
Mehran Mesbahi and Magnus Egerstedt.
\newblock {\em {Graph theoretic methods in multiagent networks}}.
\newblock Princeton University Press, 2010.

\bibitem{rezatac07}
Reza Olfati-Saber, J.~Alex Fax, and Richard~M. Murray.
\newblock {Consensus and Cooperation in Networked Multi-Agent Systems}.
\newblock {\em Proceedings of the IEEE}, 95(1):215--233, January 2007.

\bibitem{PequitoJournal}
S.~Pequito, S.~Kar, and A.~P. Aguiar.
\newblock A framework for structural input/output and control configuration
  selection of large-scale systems.
\newblock {\em Submitted to IEEE Trans. on Automatic Control, Available in
  {http://arxiv.org/abs/1309.5868}}, 2013.

\bibitem{PequitoACC13}
S.~Pequito, S.~Kar, and AP. Aguiar.
\newblock A structured systems approach for optimal actuator-sensor placement
  in linear time-invariant systems.
\newblock In {\em American Control Conference (ACC), 2013}, pages 6108--6113,
  June 2013.

\bibitem{Rahmani}
Amirreza Rahmani, Meng Ji, Mehran Mesbahi, and Magnus Egerstedt.
\newblock Controllability of multi-agent systems from a graph-theoretic
  perspective.
\newblock {\em SIAM J. Control Optim.}, 48(1):162--186, February 2009.

\bibitem{largeScale}
Dragoslav~D. Siljak.
\newblock {\em Large-Scale Dynamic Systems: Stability and Structure}.
\newblock Dover Publications, 2007.

\bibitem{Skogestad04a}
Sigurd Skogestad.
\newblock Control structure design for complete chemical plants.
\newblock {\em Computers and Chemical Engineering}, 28(1-2):219--234, 2004.

\end{thebibliography}

\end{document}